\newtheorem{thm}{Theorem}
\newtheorem{lemma}[thm]{Lemma}
\newcommand{\R}{\mathbb{R}}
\newcommand{\C}{\mathbb{C}}
\newcommand{\N}{\mathbb{N}}
\newcommand{\expe}{{\rm e}}
\newcommand\Ga{\Gamma}
\newcommand{\F}[3]{{}_2F_1\hspace{-0.05cm}\left(
  \genfrac{}{}{0pt}{}{#1}{#2};#3\right)}
\renewcommand{\P}{\mathsf{P}}
\newcommand{\Q}{\mathsf{Q}}
\begin{document}

\title{Fourier series representation of Ferrers function $\P$}
\author{Hans Volkmer}
\address{Hans Volkmer\\
    Department of Mathematical Sciences\\
    University of Wisconsin - Milwaukee\\
    volkmer@uwm.edu
}
\maketitle

In \cite[\S 6]{CPV} we considered a trigonometric expansion of the Ferrers function $\Q$ of the second kind \cite[\S14.13.2]{DLMF}. In this note we consider the analogous expansion of the  Ferrers function $\P$ of the first kind.

\begin{thm}\label{Fourier1}
Let $x\in \C-((-\infty,-1]\cup [1,\infty))$ and $\nu,\mu\in\C$ such that $\nu+\mu\not\in -\N$. Then
\begin{eqnarray*}
\P_\nu^\mu(x)&=& \frac{2^\mu}{i\sqrt\pi}(1-x^2)^{\frac12\mu} \frac{\Ga(\nu+\mu+1)}{\Ga(\nu+\frac32)}\\
&&\times\left(u^{\nu+\mu+1}\F{\mu+\frac12,\nu+\mu+1}{\nu+\frac32}{\frac{u}{v}}
-v^{\nu+\mu+1}\F{\mu+\frac12,\nu+\mu+1}{\nu+\frac32}{\frac{v}{u}}\right),
\end{eqnarray*}
where
\[ u=x+i\sqrt{1-x^2},\quad v=x-i\sqrt{1-x^2}=\frac1u.\]
\end{thm}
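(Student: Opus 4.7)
The plan is to reduce both sides to the same Fourier sine series in $\theta=\arccos x$, recognize it as $\P_\nu^\mu(\cos\theta)$ via a Mehler--Dirichlet representation, and extend to the full stated range by analytic continuation. The method parallels that used for $\Q$ in \cite[\S6]{CPV}.

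Since $uv=1$ one has $u/v=u^{2}$ and $v/u=v^{2}$. Taking first $x=\cos\theta\in(-1,1)$ so that $u=\expe^{i\theta}$ and $v=\expe^{-i\theta}$, and expanding each ${}_2F_1$ termwise, the bracket in the theorem collapses to
\[
\sum_{n=0}^{\infty}\frac{(\mu+\tfrac12)_n(\nu+\mu+1)_n}{(\nu+\tfrac32)_n\,n!}\bigl(u^{\nu+\mu+1+2n}-v^{\nu+\mu+1+2n}\bigr)=2i\sum_{n=0}^{\infty}c_n\sin\bigl((\nu+\mu+1+2n)\theta\bigr),
\]
where $c_n$ denotes the displayed Pochhammer ratio. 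Since $(1-x^2)^{1/2}=\sin\theta$, the factor $1/i$ in the prefactor absorbs the $2i$ and the claim reduces to the Fourier identity
\[
\P_\nu^\mu(\cos\theta)=\frac{2^{\mu+1}(\sin\theta)^\mu\,\Ga(\nu+\mu+1)}{\sqrt\pi\,\Ga(\nu+\tfrac32)}\sum_{n=0}^{\infty}c_n\sin\bigl((\nu+\mu+1+2n)\theta\bigr).
\]

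To prove this Fourier identity I would start from a Mehler--Dirichlet representation of $\P_\nu^\mu(\cos\theta)$, valid for $\mathrm{Re}\,\mu<\tfrac12$, symmetrize to the interval $[-\theta,\theta]$, and substitute $w=\expe^{it}$. The factorization $2(\cos t-\cos\theta)=-w^{-1}(w-u)(w-v)$ together with $2\cos((\nu+\tfrac12)t)=w^{\nu+1/2}+w^{-\nu-1/2}$ turns the integral into a contour integral on the upper unit arc from $v$ to $u$, which splits as the sum of one piece carrying $w^{\nu+\mu}$ and one carrying $w^{\mu-\nu-1}$. A M\"obius substitution sending $\{v,u\}\to\{0,1\}$ (followed, if needed, by a Pfaff/Kummer transformation) casts each piece as an Euler integral for $\F{\mu+\tfrac12,\nu+\mu+1}{\nu+\tfrac32}{\cdot}$ with arguments $u/v$ and $v/u$ respectively; the two pieces enter with opposite signs by the $u\leftrightarrow v$ symmetry, and collecting constants yields the claim for $x\in(-1,1)$ and $\mathrm{Re}\,\mu<\tfrac12$. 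Both sides are holomorphic in $x\in\C\setminus((-\infty,-1]\cup[1,\infty))$, in $\nu\in\C$, and in $\mu\in\C$ subject to $\nu+\mu\notin-\N$, so the identity extends by analytic continuation; for $x$ off the real interval one of $|u/v|,|v/u|$ exceeds $1$, so each ${}_2F_1$ is interpreted via its analytic continuation to the slit plane.

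The chief obstacle is branch-choice bookkeeping in the Mehler--Dirichlet step: the phases of $(w-u)^{-\mu-1/2}(w-v)^{-\mu-1/2}$ along the arc, the factor $(u-v)^{-2\mu-1}$ appearing after the substitution, and the $1/i$ from $dw=iw\,dt$ must combine coherently with the Euler-integral prefactor $\Ga(\nu+\tfrac32)/(\Ga(\nu+\mu+1)\Ga(\tfrac12-\mu))$ to reproduce exactly $2^\mu\,\Ga(\nu+\mu+1)/(i\sqrt\pi\,\Ga(\nu+\tfrac32))$. Verifying that the hypergeometric emerging from the Euler integration carries precisely the parameter triple $(\mu+\tfrac12,\nu+\mu+1;\nu+\tfrac32)$---which is often a Kummer transformation away from the direct Euler form---is the main calculational hurdle.
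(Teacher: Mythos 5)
The paper states Theorem~\ref{Fourier1} without any proof (the method is deferred to the analogue in \cite[\S 6]{CPV}), so there is no in-paper argument to compare against; your proposal is a self-contained alternative route, and its architecture is sound. The algebraic reduction of the bracket to the sine series is exactly the paper's own passage from Theorem~\ref{Fourier1} to \eqref{Fourier} run in reverse, the target Fourier identity is precisely \cite[(14.13.1)]{DLMF} (which the paper cites), and the final continuation step does work: the left side is analytic in $(x,\nu,\mu)$ on the stated domain, while on the right side $u/v=u^2$ and $v/u=u^{-2}$ avoid the cut $[1,\infty)$ whenever $x\notin(-\infty,-1]\cup[1,\infty)$ (for then $u\notin\R$), and $\Ga(\nu+\tfrac32)^{-1}\F{\mu+\frac12,\nu+\mu+1}{\nu+\frac32}{\cdot}$ is entire in $\nu$ while $\Ga(\nu+\mu+1)$ is analytic off $\nu+\mu\in-\N$. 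Two smaller cautions: for $0\le\Re\mu<\tfrac12$ the Gauss series on $|w|=1$ converges only conditionally and you need Abel's theorem to identify its sum with the analytic continuation, so it is cleaner to establish the identity only for $\Re\mu<0$ (absolute convergence, continuity up to the boundary circle) and then continue in $\mu$; and the factorization is $2(\cos t-\cos\theta)=w^{-1}(w-u)(w-v)$, without your minus sign.

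The genuine gap is the Mehler--Dirichlet-to-Euler-integral step, which you flag but do not resolve, and which as described would not close. Carrying out a substitution mapping $\{v,u\}$ to $\{0,1\}$ in $\int_v^u w^{\nu+\mu}(w-u)^{-\mu-1/2}(w-v)^{-\mu-1/2}\,dw$ produces a hypergeometric function with lower parameter $1-2\mu$ evaluated at the argument $1-v/u$ (for instance $\F{\frac12-\mu,\nu-\mu+1}{1-2\mu}{1-v/u}$, up to an Euler transform). No single Pfaff or Euler transformation changes the lower parameter to $\nu+\tfrac32$ or the argument from $1-z$ to $z$; that requires the two-term connection formula \cite[(15.8.4)]{DLMF}. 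Consequently each of your two pieces splits into two further terms, and the actual content of the proof is verifying that the resulting four terms, with their Gamma-factor coefficients and branch factors, recombine into exactly the two terms of the theorem --- none of which is supplied. A shorter way to complete your argument is to bypass Mehler--Dirichlet altogether: take \cite[(14.13.1)]{DLMF} as known, restrict to $\Re\mu<0$ so that the series converges absolutely and splits into the two boundary values of $\F{\mu+\frac12,\nu+\mu+1}{\nu+\frac32}{\expe^{\pm2i\theta}}$, and then invoke your analytic continuation in $\mu$ and $x$. As it stands, the proposal is a plausible plan with the decisive computation missing.
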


If we set $x=\cos\theta$,
$\Re\theta\in(0,\pi)$,
then Theorem \ref{Fourier1} gives
\begin{eqnarray}\label{eq3}
&&\hspace*{1cm}
\P_\nu^\mu(\cos\theta)= \frac{2^\mu}{i\sqrt\pi}(\sin\theta)^\mu\frac{\Ga(\nu+\mu+1)}{\Ga(\nu+\frac32)}\\
&&\times\left(\expe^{i(\nu+\mu+1)\theta}\F{\mu+\frac12,\nu+\mu+1}{\nu+\frac32}{\expe^{2i\theta}}
-\expe^{-i(\nu+\mu+1)\theta}\F{\mu+\frac12,\nu+\mu+1}{\nu+\frac32}{\expe^{-2i\theta}}\right).\nonumber
\end{eqnarray}
If $\theta\in(0,\pi)$ then the arguments $w=\expe^{\pm 2i\theta}$ of the hypergeometric function lie on the unit circle $|w|=1$.
Provided the hypergeometric series converges at $\expe^{\pm 2i\theta}$ we obtain  \cite[(14.13.1)]{DLMF}
\begin{equation}\label{Fourier}
\P_\nu^\mu(\cos\theta)= \frac{2^{\mu+1}}{\sqrt\pi}(\sin\theta)^\mu\sum_{k=0}^\infty \frac{\Ga(\nu+\mu+k+1)}{\Ga(\nu+k+\frac32)}
\frac{(\mu+\frac12)_k}{k!} \sin((\nu+\mu+2k+1)\theta) .
\end{equation}
Regarding the convergence of the series in \eqref{Fourier} we have the following result.

\begin{thm}\label{Fourierconvergence}
Let $\theta\in(0,\pi)$, $\nu,\mu\in\C$ such that $\nu+\mu\in\C\setminus-\N$.\\
(a) If $\Re\mu<0$ then the series in \eqref{Fourier} converges absolutely.\\
(b) If $0\le \Re \mu<\frac12$ then the series in \eqref{Fourier} converges, but, if $\theta\ne\frac12\pi$,
it does not converge absolutely.\\
(c) If $\Re\mu\ge \frac12$ and $\theta\ne \frac12\pi$, then the series in \eqref{Fourier} diverges.
\end{thm}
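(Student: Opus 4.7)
\emph{Proof plan.} The behaviour of the series is governed by the asymptotics of the coefficient
\[
a_k:=\frac{\Ga(\nu+\mu+k+1)}{\Ga(\nu+k+\tfrac32)}\,\frac{(\mu+\tfrac12)_k}{k!}.
\]
The first step is to apply the standard ratio $\Ga(k+\alpha)/\Ga(k+\beta)=k^{\alpha-\beta}(1+O(1/k))$ twice to obtain
\[
a_k\sim\frac{k^{2\mu-1}}{\Ga(\mu+\tfrac12)}\qquad(k\to\infty),
\]
so that $|a_k|\asymp k^{2\Re\mu-1}$; the same expansion refined one order further gives $a_{k+1}-a_k=O(k^{2\Re\mu-2})$. (When $\mu+\tfrac12\in-\N_0$ the series terminates and the claim is trivial; elsewhere $\Ga(\mu+\tfrac12)$ is finite and nonzero.)

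Part (a) is then immediate by comparison, since $|\sin((\nu+\mu+2k+1)\theta)|$ is uniformly bounded in $k$ and $\sum k^{2\Re\mu-1}<\infty$ when $\Re\mu<0$. The convergence assertion in (b) follows from Dirichlet's test: the partial sum $S_N=\sum_{k=0}^{N}\sin((\nu+\mu+2k+1)\theta)$ is a linear combination of two geometric sums with ratios $\expe^{\pm 2i\theta}$, each bounded in $N$ because $\theta\in(0,\pi)$ forces $\expe^{\pm 2i\theta}\ne 1$; the refined asymptotic shows that $a_k\to 0$ and $\sum_k|a_{k+1}-a_k|<\infty$ precisely when $\Re\mu<\tfrac12$, supplying the other hypothesis of the test.

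For the non-absolute convergence in (b) and the divergence in (c), the plan is to produce a lower bound on the oscillatory factor along a set of indices of positive lower density. Separating real and imaginary parts of $\phi_k=(\nu+\mu+2k+1)\theta$ gives
\[
|\sin\phi_k|^2=\sin^2((\Re(\nu+\mu)+2k+1)\theta)+\sinh^2(\theta\,\Im(\nu+\mu)),
\]
so if $\Im(\nu+\mu)\ne 0$ then $|\sin\phi_k|\ge|\sinh(\theta\,\Im(\nu+\mu))|>0$ uniformly in $k$. If $\nu+\mu\in\R$, the hypothesis $\theta\ne\tfrac12\pi$ together with $\theta\in(0,\pi)$ ensures---by Weyl equidistribution when $\theta/\pi$ is irrational, by direct periodicity when it is rational---the existence of $\delta>0$ and of a set $K\subset\N$ of positive lower density with $|\sin\phi_k|\ge\delta$ for $k\in K$. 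Combined with $|a_k|\asymp k^{2\Re\mu-1}$, this yields $\sum_{k\in K}|a_k\sin\phi_k|\gtrsim\sum_{k\in K}k^{2\Re\mu-1}=\infty$ whenever $\Re\mu\ge 0$, proving (b); in case (c), the sharper estimate $|a_k\sin\phi_k|\ge c>0$ along $K$ prevents the general term from tending to zero.

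The main subtlety is the density argument in the rational case of $2\theta/\pi$, and the value $\theta=\tfrac12\pi$ is precisely what must be excluded: there $\sin\phi_k=(-1)^k\cos((\nu+\mu)\pi/2)$ vanishes identically when $\nu+\mu$ is an odd integer (in which case the series is trivially $0$) and otherwise reduces to a plain alternating sum that does not fit the generic pattern of (b) and (c).
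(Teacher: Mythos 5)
Your proposal is correct, but it reaches the conclusions by a genuinely different and more self-contained route than the paper. For part (a) and the convergence half of (b), the paper simply cites the known behaviour of the Gauss hypergeometric series on the unit circle ($\Re(c-a-b)>0$ for absolute convergence, $-1<\Re(c-a-b)\le 0$ for conditional convergence away from $w=1$) applied to the two ${}_2F_1$'s in \eqref{eq3}; you instead work directly with the coefficient asymptotics $a_k\sim k^{2\mu-1}/\Ga(\mu+\frac12)$, using comparison for (a) and the generalized Dirichlet test for (b) (bounded geometric partial sums of $\sin\phi_k$ since $\expe^{\pm 2i\theta}\ne 1$, together with $a_{k+1}-a_k=O(k^{2\Re\mu-2})$, summable exactly for $\Re\mu<\frac12$). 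That is a sound, essentially elementary replacement for the DLMF citation. The largest divergence is in the lower bounds: for the non-absolute convergence in (b) and the divergence in (c), the paper uses two short lemmas --- $\sum k^{-1}|\sin((a+2k)\theta)|=\infty$ via $|\sin z|\ge|\sin(\Re z)|\ge\sin^2(\Re z)=\frac12-\frac12\cos(2(\cdot))$ plus the Dirichlet test for the cosine sum, and a two-line recurrence argument showing $\sin(a+bn)\not\to 0$ when $\sin b\ne 0$ --- whereas you split into $\Im(\nu+\mu)\ne 0$ (uniform lower bound via $\sinh$) and $\nu+\mu\in\R$ (a set of positive lower density via Weyl equidistribution or periodicity). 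Your density argument does work: in the rational case, if every residue class of $k$ gave $\sin\phi_k=0$ one would conclude $2\theta\in\pi\Z$, i.e.\ $\theta=\frac12\pi$; and a set $K$ of positive lower density satisfies $\sum_{k\in K}1/k=\infty$, which handles both (b) with $\Re\mu\ge 0$ and (c) with $\Re\mu\ge\frac12$. But it is noticeably heavier machinery than the paper's Lemmas, which dispatch both cases uniformly with no case split on the arithmetic nature of $\theta/\pi$; if you write this up, the equidistribution/periodicity step is the one place where the details must be supplied carefully.
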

\begin{proof}
(a) It is known \cite[\S15.2(i)]{DLMF} that the Gauss hypergeometric series $\F{a,b}{c}{w}$ converges absolutely on the unit circle $|w|=1$ if $\Re(c-a-b)>0$. In our case $a=\mu+\frac12$, $b=\nu+\mu+1$, $c=\nu+\frac32$ so
$c-a-b=-2\mu$. If $\Re\mu<0$ it follows that the series in \eqref{Fourier} is the sum of two absolutely convergent series and so is
itself absolutely convergent.\\
(b) Suppose that $0\le\Re\mu<\frac12$.
If $-1<\Re(c-a-b)\le 0$ then the Gauss hypergeometric series converges conditionally at $|w|=1$, $w\ne 1$ \cite[\S15.2(i)]{DLMF}.
It follows that the series in \eqref{Fourier} is the sum of two convergent series and so is itself convergent.
However, it is not true that the sum of two conditionally convergent series is conditionally convergent.
We still have to show that the series in \eqref{Fourier} does not converge absolutely if $\theta\ne \frac12\pi$.
According to \cite[(5.11.12)]{DLMF},
\[ \frac{\Ga(a+z)}{\Ga(b+z)}\sim z^{a-b}\]
as $z\to+\infty$.
Therefore, as $k\to\infty$,
\begin{eqnarray*}
&& \frac{\Ga(\nu+\mu+k+1)}{\Ga(\nu+k+\frac32)}\frac{(\mu+\frac12)_k}{k!}=\\
&&\frac{\Ga(\nu+\mu+k+1)}{\Ga(\nu+k+\frac32)}\frac{\Ga(\mu+\frac12+k)}{\Ga(k+1)\Ga(\mu+\frac12)}
\sim \frac{k^{\mu-\frac12} k^{\mu-\frac12}}{\Ga(\mu+\frac12)}=\frac{k^{2\mu-1}}{\Ga(\mu+\frac12)} .
\end{eqnarray*}
Since $1/\Ga(\mu+\frac12)\ne 0$, there are positive constants $\kappa$ and $K$ such that
\[ \left|\frac{\Ga(\nu+\mu+k+1)}{\Ga(\nu+k+\frac32)}\frac{(\mu+\frac12)_k}{k!}\right|\ge
\frac{\kappa}{k},
\]
for $k\ge K$.
The second part of statement (b) now follows from Lemma \ref{l3}.\\
(c)
Suppose that $\Re\mu\ge \frac12$ and the series in \eqref{Fourier} converges. Then the terms of the series must converge to $0$.
It follows that $\sin((\nu+\mu+2k+1)\theta)\to 0$ as $k\to\infty$. By Lemma \ref{l4} this is impossible unless $\theta=\frac12\pi$.
Therefore, the series in \eqref{Fourier} will diverge for $\theta\ne \frac12\pi$.
\end{proof}

\begin{lemma}\label{l3}
Let $a\in \C$ and
$\theta\in(0,\pi)$. Then
\[ \sum_{k=1}^\infty \frac1k |\sin((a+2k)\theta)|=\infty \]
unless $\sin a=0$ and $\theta=\frac12\pi$.
\end{lemma}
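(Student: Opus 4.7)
The plan is to compare $\sum\frac1k|\sin((a+2k)\theta)|$ with the harmonic series, separating the argument according to whether $a$ is real and whether $\theta=\tfrac12\pi$. The case $\Im a\ne 0$ is immediate: a short expansion of $|\sin z|^2$ for $z=(a+2k)\theta$ gives
\[ |\sin((a+2k)\theta)|^2=\tfrac12\bigl[\cosh(2\theta\,\Im a)-\cos((2\Re a+4k)\theta)\bigr]\ge\tfrac12\bigl(\cosh(2\theta\,\Im a)-1\bigr)>0, \]
so the terms are bounded below by a positive constant and the series dominates a multiple of $\sum 1/k$. This reduces the problem to $a\in\R$.

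For $a\in\R$ with $\theta\ne\tfrac12\pi$ I would apply the elementary inequality $|\sin x|\ge\sin^2 x=\tfrac12(1-\cos 2x)$ to obtain
\[ \sum_{k=1}^\infty\frac1k|\sin((a+2k)\theta)|\ge\tfrac12\sum_{k=1}^\infty\frac1k-\tfrac12\sum_{k=1}^\infty\frac{\cos((2a+4k)\theta)}{k}. \]
The first piece diverges. The second converges by Dirichlet's test, since for $\theta\in(0,\pi)$ the partial sums of $\cos((2a+4k)\theta)$ are uniformly bounded exactly when $\expe^{4i\theta}\ne 1$, i.e.\ when $\theta\ne\tfrac12\pi$. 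Thus divergence in this subcase as well.

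Finally, for the residual case $a\in\R$, $\theta=\tfrac12\pi$, direct substitution gives $\sin((a+2k)\tfrac\pi2)=(-1)^k\sin(\tfrac{a\pi}2)$, so the series reduces to $|\sin(\tfrac{a\pi}2)|$ times the harmonic series, infinite unless that constant vanishes. The main technical hurdle throughout is the Dirichlet estimate: the hypothesis $\theta\ne\tfrac12\pi$ is exactly what keeps $\expe^{4i\theta}-1\ne 0$ and hence the cosine partial sums bounded.
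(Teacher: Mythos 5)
Your proof is correct and follows essentially the same route as the paper: reduce to real $a$, bound $|\sin x|\ge\sin^2 x=\frac12(1-\cos 2x)$, and handle the oscillating part by the Dirichlet test, with the hypothesis $\theta\ne\frac12\pi$ entering exactly through $\expe^{4i\theta}\ne 1$. The only cosmetic difference is that you dispose of $\Im a\ne 0$ separately via the uniform lower bound $|\sin((a+2k)\theta)|^2\ge\frac12(\cosh(2\theta\,\Im a)-1)>0$, whereas the paper uses $|\sin z|\ge|\sin(\Re z)|$ to pass directly to the real case; both are valid.
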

\begin{proof}
If $\theta=\frac12\pi$ then $|\sin((a+2k)\theta)|$ is independent of $k$ and the assertion follows.
Now suppose that $\theta\ne \frac12\pi$. Since $|\sin z|\ge |\sin(\Re z)|$ for all $z\in\C$ it is enough to consider $a\in\R$.
Then $|\sin((a+2k)\theta)|\ge \sin^2((a+2k)\theta)$.
Therefore, it is enough to show that
\[ \sum_{k=1}^\infty \frac1k \sin^2((a+2k)\theta) =\infty .\]
Now
\[ \frac1k\sin^2((a+2k)\theta) =\frac1{2k}-\frac1{2k} \cos(2(a+2k)\theta) .\]
The series $\sum_{k=1}^\infty 1/(2k)$ diverges.  The series
$\sum_{k=1}^\infty
\cos(2(a+2k)\theta)/(2k)$ converges by the Dirichlet test
provided the partial sums $\sum_{k=1}^n \cos(2(a+2k)\theta)$ form a bounded sequence.
This is true for $\theta\in(0,\pi)$, $\theta\ne \frac\pi2$.
\end{proof}

\begin{lemma}\label{l4}
Let $a,b\in\C$ and $\sin b\ne 0$. Then the sequence $\sin(a+bn)$ does not converge to $0$ as $n\to\infty$.
\end{lemma}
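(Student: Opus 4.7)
The plan is to argue by contradiction using the Pythagorean identity together with a discrete difference formula. Assume for contradiction that $\sin(a+bn)\to 0$ as $n\to\infty$.

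The key algebraic tool is the identity
\[ \sin(x+y)-\sin(x-y)=2\cos x\sin y,\]
which is valid for arbitrary complex $x,y$. Applying it with $x=a+bn$ and $y=b$ yields
\[ \sin(a+b(n+1))-\sin(a+b(n-1))=2\cos(a+bn)\sin b.\]
Since both terms on the left tend to $0$ by assumption, the right-hand side tends to $0$ as well. Because $\sin b\ne 0$, dividing gives $\cos(a+bn)\to 0$.

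Now I invoke the Pythagorean identity $\sin^2 z+\cos^2 z=1$ (valid for all $z\in\C$) with $z=a+bn$. The left-hand side tends to $0+0=0$, while the right-hand side is the constant $1$, a contradiction. Therefore $\sin(a+bn)$ cannot converge to $0$.

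The only delicate point is to keep in mind that $a$ and $b$ are complex, so one should not be tempted to pass to real parts or use boundedness of $\cos$ and $\sin$ (which fails in general for complex arguments). The trigonometric identities used are purely formal consequences of the definitions $\sin z=(e^{iz}-e^{-iz})/(2i)$ and $\cos z=(e^{iz}+e^{-iz})/2$, hence hold without any reality restriction, so the argument goes through verbatim in $\C$.
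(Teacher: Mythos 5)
Your proof is correct and follows essentially the same route as the paper: both arguments use a sine addition/subtraction formula to deduce $\cos(a+bn)\to 0$ from the assumed $\sin(a+bn)\to 0$, and then contradict $\sin^2 z+\cos^2 z=1$. The paper uses the single addition formula $\sin(a+b(n+1))=\cos(a+bn)\sin b+\sin(a+bn)\cos b$ where you use the difference identity, but this is only a cosmetic variation.
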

\begin{proof}
Suppose that $\sin(a+bn)\to 0$ as $n\to\infty$.
We have
\[ \sin(a+b(n+1))=\cos(a+bn)\sin b+\sin(a+bn)\cos b.\]
If we let $n\to\infty$ and use $\sin b\ne 0$ we obtain that $\cos(a+bn)\to 0$. Since $\cos^2x+\sin^2x=1$ this is a contradiction.
\end{proof}


\begin{thebibliography}{9}
\bibitem{CPV}
H. Cohl, J. Park and H. Volkmer, Gauss hypergeometric representation of the Ferrers function of the second kind,
Sigma 17 (2021), 053, 33 pages.
\bibitem{DLMF}
F.W. Olver, D.W. Lozier, R.F. Boisvert and C.W. Clark,
{NIST} handbook of mathematical functions, Cambridge University Press, 2010.
\end{thebibliography}
\end{document}